\renewcommand{\a}{\alpha}
\renewcommand{\th}{\theta}
\newcommand{\n}{\nu}
\newcommand{\ph}{\phi}
\def\ph{\phi}
\def\md#1{\ \mbox{\rm(mod }{#1})}
\def\npp#1{N_{\ph}^+(#1)}
\def\ol{\overline}
\def\ph{\phi}
\newcommand{\Q}{{\mathbb Q}}
\newcommand{\Z}{{\mathbb Z}}
\newcommand{\F}{{\mathbb F}}
\def\md#1{\ \mbox{\rm(mod }{#1})}
\def\npp#1{N_{\ph}^+(#1)}
\newtheorem{theorem}{Theorem}[section]
\newtheorem{proposition}[theorem]{Proposition}
\newtheorem{lemma}[theorem]{Lemma}
\newtheorem{corollary}[theorem]{Corollary}
\theoremstyle{definition}
\theoremstyle{remark}
\newtheorem{example}[theorem]{Example}
\newtheorem{remark}[theorem]{Remark}
\begin{document}
\title[]{ on  the  monogenity   of  pure number fields: \it{application to the  existence of canonical number systems} }
	\textcolor[rgb]{1.00,0.00,0.00}{}
	\author{  Hamid Ben Yakkou, Brahim Boudine and Pagdame Tiebekabe}\textcolor[rgb]{1.00,0.00,0.00}{}
	%	\author{  Hamid Ben Yakkou, Issam Aghzer  and Abdelkarim Boua }\textcolor[rgb]{1.00,0.00,0.00}{}
	\address{Polydisciplinary Faculty of Béni-Mellal, University Sultan Moulay Slimane, Béni-Mellal,  Morocco}\email{beyakouhamid@gmail.com }
	\address{Faculty of Sciences, Moulay Ismail University, Meknes, Morocco}\email{b.boudine@umi.ac.ma}
	\address{Department of Mathematics, Faculty of Sciences and Techniques, University of Kara, P. O. Box 43,
	Kara, Togo}\email{pagdame.tiebekabe@ucad.edu.sn}
	\keywords{Canonical number system, Newton polygon, Theorem of Ore, prime ideal factorization,   power integral basis, monogenity} \subjclass[2020]{11A63, 11R04,
		11R16, 11R21, 11Y40}
	%\date{\today}
	\maketitle
	\vspace{0.3cm}
	\begin{abstract}

Let $m$ be a rational integer with $m \neq 0, \pm 1$, and consider the pure number field $K = \mathbb{Q}(\sqrt[n]{m})$ with $n \ge 3$. Most papers discussing the monogenity of pure number fields focus exclusively on the case where $m$ is square-free. For every integer $n \ge 4$, the monogenity of number fields of degree $n$ is not completely characterized. For example, the monogenity of the pure quartic field $\mathbb{Q}(\sqrt[4]{m})$ is not yet fully described, even when $m$ is square-free (see the recent 2024 paper \cite{Nyul} by Arn\'oczki and Nyul). In this paper, based on a classical theorem of Ore concerning prime ideal decomposition in number fields  \cite{MN92, O}, we study the monogenity of $K$ without assuming $m$ to be square-free. As an application, we present several examples related to canonical number systems (CNS).  In particular, we observe that our results extend some of those presented in \cite{BFC, BF, HNHCNS}.

	\end{abstract}
	\maketitle
	%\tableofcontents
	%\newpage

\section{Introduction}
Let $F(x) \in \mathbb{Z}[x]$ be a monic irreducible polynomial of degree $n$, and let $K = \mathbb{Q}(\alpha)$ be a number field generated by a root $\alpha$ of $F(x)$. Denote by $\mathbb{Z}_K$ the ring of integers of $K$.  A primitive element of $\mathbb{Z}_K$ is an element $\th$ verifying $K=\mathbb{Q}(\th)$. Let  $\widehat{\mathbb{Z}}_K$ be the set of all primitive elements of $\mathbb{Z}_K$. We say that $K$ is monogenic if there exists $\theta \in \widehat{\mathbb{Z}}_K$ such that $(1,\theta, \theta^2, \ldots,\theta^{n-1})$ is a basis of  the  $\mathbb{Z}$-module $\mathbb{Z}_K$.   In this case, we say that $\th$ is a generator of a power integral basis of $\Z_K$ (shortly, GPIB of $\Z_{K}$).

The monogenity of number fields is one of the most actively studied classical problems in algebraic number theory (cf. \cite{ANHN,EG,BGG6,G19,GG5,Gyoryrsurlespolynomes,Gyoryredecide,JonesRamanujan,PP,PethoZigler}). By Hensel's discovery in 1894, the monogenity of  $K$ is encoded by  the solvability of an index form equation \begin{equation}\label{IFE}
I(x_2,...,x_n) = \pm 1\,\, \mbox{in}\,\, x_2,..., x_n \in \mathbb{Z}
\end{equation} associated with an integral basis
$\{1, \omega_2,...,\omega_{n-1}\}$ of K (see \cite{EG,G19} and  \cite[2.2, Section 6, p. 64]{Na}). 

In \cite{Gyoryrsurlespolynomes,Gyoryredecide,GyorySeminarFrensh,Gyorybounds}, Gy\H{o}ry provided  general algorithms to decide whether $K$ is monogenic or not and to determine  all power integral bases in $\Z_K$, as well as effective bounds for the solutions of \eqref{IFE} and algorithms for solving index form equations. He  also studied in \cite{Gyoryrelative, Gyoryrdiscriminant} the monogenity of relative extensions. For comprehensive surveys on discriminant and index form theory, their applications to Diophantine equations, and monogenity of number fields, see   the   book  \cite{EG} by Evertse and  Gy\H{o}ry. 
 
For cubic number fields, see \cite{LN} by Llorente and Nart.   For quartic number fields, we refer to the works \cite{Bhargave} by   Alpöge,  Bhargava and  Shnidman,    \cite{Akhtari} by Akhtari, \cite{Nyul} by Arn\'oczki and Nyul,  \cite{DS} by Davis and Spearman, and \cite{GPP4} by Ga\'{a}l,  Peth\H{o} and Pohst. In \cite{GG5}, Ga\'{a}l and Gy\H{o}ry described an algorithm to solve index form equations in quintic number fields and made very extensive computations for power integral bases in totally real quintic number fields with Galois group $S_5$. In  \cite{BGG6}, Bilu, Ga\'{a}l and Gy\H{o}ry studied the monogenity of  sextic number fields. In \cite{PethoZigler}, Peth\H{o} and Ziegler gave an efficient criterion to decide whether the maximal order of a biquadratic field has a unit power integral basis or not. 
  For multiquadratic number fields, see \cite{PP} by Peth\H{o} and Pohst. In \cite{JonesRamanujan}, Jones studied the monogenity of sextic reciprocal dihedral polynomials.  For a survey on monogeneity, with a focus on efficient algorithms for various classes of number fields, see the books \cite{EG} by Evertse and Gy\H{o}ry and \cite{G19} by Ga\'{a}l. For an additional bibliography on monogeneity, see Narkiewicz's book \cite{Na}.

 Recently, the interest in the study of monogenity of pure number fields   $\Q(\sqrt[n]{m})$   increased clearly. Despite the lack of a standard classification, notable progress has been made in this direction.  There are several works that consider the case when $m$ is a square-free rational integer: for example   Gassert \cite{Gassert} investigated when $\Z_K=\Z[\a]$, Ahmad \emph{et al}.  \cite{ANHN, AN} explored the case $n=6$, Hameed and Nakahara \cite{HaNak8BullRom} studied the case $n=8$,  Ga\'al and Remete  studied  the cases $3 \le n \le 9$. %In \cite{ElFadil37}, El Fadil investigated the case $n=3^r\cdot 7^s$,
  % Ben Yakkou and El Fadil \cite{BF} studied the monogenity of pure number fields defined by $x^{p^r}-m$ where $m$ is a square-free integer. 
In \cite{BF, BFC, BDB},  Ben Yakkou \emph{et al.}  investigated  several classes of pure number fields with variable exponents. 
 
In the present paper, we study the monogenity of pure number fields $K = \mathbb{Q}(\sqrt[n]{m})$ without assuming that $m$ is square-free. 

\section{Main results}

Let $p$ be a prime. Throughout this paper, $\F_p$ denotes the finite field with $p$ elements. For $m \in \Z$, $\n_p(m)$ stands for the $p$-adic valuation of $m$, and $PD(m)$ denotes the set of prime divisors of   $m$. % and let $t_p=\frac{t}{p^{\nu_p(t)}}$. 
For two positive integers $d$ and $u$, we denote by $N_p(d)$ the number of monic irreducible polynomials of degree $d$ in $\mathbb{F}_p[x]$, which is given by
\[
N_p(d) = \frac{1}{d} \sum_{t \mid d} \mu(t)\, p^{\tfrac{d}{t}},
\]
where $\mu$ is the Möbius function (see Proposition 4.35 in \cite{Na}). We also denote by $N_p(d,u,m)$ the number of monic irreducible factors of degree $d$ of the polynomial $x^u - m$ in $\mathbb{F}_p[x]$.

Now, keeping the above notation, we present our results. We begin with the following theorem, which provides infinite families of non-monogenic pure number fields; that is, $\mathbb{Z}_K$ has no power integral basis.

\begin{theorem}\label{thgeneral}
Let $F(x) = x^n - m \in \mathbb{Z}[x]$ be an irreducible polynomial, and let $K = \mathbb{Q}(\alpha)$ be a  pure number field generated by a root $\alpha$ of $F(x)$. Assume that  $n=u\cdot p^r$, where  $p$ is an odd prime not dividing $u \cdot m$, and let  $\n=\n_p(m^{p-1}-1)$. If $\min\{r+1, \n\} N_p(d, u, m)> N_p(d)$ for some positive integer $d$, then $K$ is non-monogenic.
\end{theorem}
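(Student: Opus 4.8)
The plan is to show that $p$ is a \emph{common index divisor} of $K$, which immediately rules out monogenity: if $\Z_K=\Z[\th]$ for some $\th$, then by Dedekind's theorem the decomposition of $p\Z_K$ is mirrored by the factorization modulo $p$ of the minimal polynomial of $\th$, so for every residue degree $d$ the number of primes of $\Z_K$ above $p$ of residue degree $d$ cannot exceed $N_p(d)$, the number of available monic irreducibles of degree $d$ in $\F_p[x]$. It therefore suffices to prove that the number of prime ideals of $\Z_K$ above $p$ of residue degree $d$ is exactly $\min\{r+1,\n\}\,N(d,u,m)$; the hypothesis $\min\{r+1,\n\}\,N(d,u,m)>N_p(d)$ then yields the contradiction.

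First I would reduce $F$ modulo $p$. Since $p\nmid um$, the polynomial $x^u-\overline m$ is separable over $\F_p$, say $x^u-\overline m=\prod_i\phi_i$ with the $\phi_i$ distinct monic irreducibles; and since $\overline m^{\,p^r}=\overline m$ in $\F_p$ we obtain $F\equiv (x^u-\overline m)^{p^r}=\prod_i\phi_i^{\,p^r}\pmod p$. As each $\phi_i$ is repeated, $p$ is not resolved by Dedekind's criterion, so I would invoke the theorem of Ore and compute, for each factor $\phi_i$ of degree $d_i:=\deg\phi_i$, the $\phi_i$-Newton polygon $N_{\phi_i}(F)$.

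The heart of the proof is this polygon. The coefficient controlling the leftmost vertex has $p$-adic valuation $\n_p(m^{p^r}-m)$, and by lifting the exponent, writing $m^{p^r-1}-1=(m^{p-1})^{(p^r-1)/(p-1)}-1$ and using $(p^r-1)/(p-1)\equiv 1\pmod p$, one gets $\n_p(m^{p^r}-m)=\n_p(m^{p-1}-1)=\n$. The remaining vertices are governed by the binomial valuations $\n_p\binom{p^r}{k}=r-\n_p(k)$, whose lower hull has vertices precisely at the abscissae $k=p^{\,j}$ at heights $r-j$. Taking the lower convex hull of $(0,\n)$ together with the chain $\{(p^{\,j},r-j):0\le j\le r\}$ and splitting into the cases $\n\le r$ and $\n\ge r+1$, I would show the principal polygon has exactly $\min\{r+1,\n\}$ sides and that each side has horizontal length equal to the denominator of its slope, so that each attached residual polynomial has degree one. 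A degree-one residual polynomial is automatically separable, so $\phi_i$ is $F$-regular and Ore's theorem applies: each side contributes exactly one prime, and since the residual polynomials are linear these primes all have residue degree $d_i$ over $p$. The consistency check $\sum_{\mathfrak P}e_{\mathfrak P}f_{\mathfrak P}=d_i p^r$ confirms that no primes of other residue degree arise from $\phi_i$.

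Summing over the factors $\phi_i$ of a fixed degree $d$ then gives exactly $\min\{r+1,\n\}\,N(d,u,m)$ primes of residue degree $d$ above $p$, which completes the argument through the common-index-divisor criterion. I expect the main obstacle to be the careful bookkeeping of the Newton polygon: locating the join vertex $(p^{\,r-\n+1},\n-1)$ when $\n\le r$ (respectively $(1,r)$ when $\n\ge r+1$), verifying convexity so that the sides are exactly as claimed, and checking that each side has length equal to its denominator $e$, for it is this last point that forces every residual polynomial to be linear and hence guarantees that the primes have residue degree exactly $d$ rather than a larger multiple of $d$.
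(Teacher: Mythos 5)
Your proposal is correct and follows essentially the same route as the paper: reduce $F$ modulo $p$ to $(x^u-\overline{m})^{p^r}$, compute each $\phi_i$-Newton polygon (which the paper imports as Lemma~\ref{Polygon} from \cite{BDB}, while you rederive it from $\n_p\binom{p^r}{k}=r-\n_p(k)$ and $\n_p(m^{p^r}-m)=\n$), observe that all $\min\{r+1,\n\}$ sides have degree one so the residual polynomials are separable and Ore's theorem yields that many primes of residue degree $d$ per factor, and conclude via the common-index-divisor criterion of Lemma~\ref{lemma}. The only cosmetic difference is that you claim the prime count is exact where the paper only needs (and states) a lower bound, and that the leftmost vertex valuation is exactly $\n$ only in the case $\n\le r$ (when $\n\ge r+1$ one only knows $\n_0\ge r+1$, which suffices); neither point affects the argument.
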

\begin{remark}
Note that the condition that $m$ is square-free is not required in the above theorem.
\end{remark}
\begin{remark}
The above theorem implies \cite[Theorem 2.4]{BF} and   \cite[Theorem 2.2]{BFC},  
 where the special cases $n= p^r$ and $n=2^r \cdot 5^s$, with  $m$  square-free,  were  previously  studied, respectively.    
\end{remark}
As a consequence of the above result, the following four corollaries provide infinite parametric families of non-monogenic pure number fields with variable exponents.
\begin{corollary}\label{Cor5.7}
For $F(x)=x^{5^r \cdot 7^s}-m$, if any   of the following conditions holds:
\begin{enumerate}
%\item $r \ge 2, s \ge 1$, and $m^6 \equiv 1 \md{49}$.
\item $r \ge 1, s \ge 7$ and $m^6 \equiv 1 \md{7^8}$.
\item  $r \ge 5, s \ge 1 $ and $m^4 \equiv 1 \md{5^6}$,
\end{enumerate}
then $K$ is non-monogenic.
\end{corollary}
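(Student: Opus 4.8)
The plan is to derive both parts directly from Theorem \ref{thgeneral} by making the right choice of the prime $p$, the factor $u$, and the test degree $d$; the only subtlety is the clash between the exponents $r,s$ appearing in the corollary and the exponent of $p$ (also called $r$) appearing in the theorem. In each case I would take $d=1$, so that the inequality to be verified reads $\min\{r+1,\n\}\,N_p(1,u,m) > N_p(1)$, and I would use that $N_p(1)=p$ (there are exactly $p$ monic linear polynomials over $\F_p$) together with the elementary fact that $N_p(1,u,m)=1$ whenever $\gcd(u,p-1)=1$: since $p\nmid u\cdot m$ the binomial $x^u-\overline m$ is separable with $\overline m\in\F_p^{\ast}$, and the power map $a\mapsto a^u$ is then a bijection of $\F_p^{\ast}$, so $x^u-\overline m$ has a single root in $\F_p$.

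For part (1) I would apply the theorem with $p=7$, with the role of the theorem's exponent of $p$ played by $s$, and with $u=5^r$, so that $n=u\cdot 7^{s}$. Here $7$ is odd and, since $m^6\equiv 1 \md{7^8}$ forces $7\nmid m$, it does not divide $u\cdot m=5^r m$; thus the hypotheses of Theorem \ref{thgeneral} hold. The congruence gives $\n=\n_7(m^{6}-1)\ge 8$, and since $s\ge 7$ we have $s+1\ge 8$, whence $\min\{s+1,\n\}\ge 8$. Because $\gcd(5^r,6)=1$, the above remark yields $N_7(1,5^r,m)=1$, and therefore $\min\{s+1,\n\}\,N_7(1,5^r,m)\ge 8 > 7 = N_7(1)$, so $K$ is not monogenic.

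For part (2) I would instead take $p=5$, with the theorem's exponent of $p$ now being $r$, and $u=7^s$, so $n=u\cdot 5^{r}$. Again $5$ is odd and $m^4\equiv 1 \md{5^6}$ forces $5\nmid m$, so $5\nmid u\cdot m$. Now $\n=\n_5(m^4-1)\ge 6$, and $r\ge 5$ gives $r+1\ge 6$, so $\min\{r+1,\n\}\ge 6$. Since $\gcd(7^s,4)=1$ we get $N_5(1,7^s,m)=1$, and hence $\min\{r+1,\n\}\,N_5(1,7^s,m)\ge 6 > 5 = N_5(1)$; Theorem \ref{thgeneral} again gives non-monogenity.

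The computation is routine once the dictionary is fixed, so there is no serious obstacle; the points requiring care are (i) keeping the two meanings of the symbol $r$ apart when invoking the theorem, and (ii) justifying $N_p(1,u,m)=1$, which is exactly the statement that raising to the $u$-th power permutes $\F_p^{\ast}$, valid because $u$ is a prime power ($5^r$ or $7^s$) with base coprime to $p-1$ ($6$ or $4$). Taking $d=1$ is what makes the inequality tight ($8>7$ and $6>5$), which is precisely why the thresholds $s\ge 7$ with $\n_7\ge 8$, and $r\ge 5$ with $\n_5\ge 6$, are the natural hypotheses to impose.
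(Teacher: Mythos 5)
Your proposal is correct and follows essentially the same route as the paper: apply Theorem \ref{thgeneral} with $d=1$, taking $p=7$, $u=5^r$ for part (1) and $p=5$, $u=7^s$ for part (2), and check $\min\{\cdot,\cdot\}\cdot N_p(1,u,m)>N_p(1)$. Your explicit justification that $N_p(1,u,m)=1$ via the bijectivity of the $u$-th power map on $\F_p^{\ast}$ (since $\gcd(u,p-1)=1$) and your correct value $N_p(1)=p$ are in fact cleaner than the paper's write-up, which asserts these facts with a typo ($N_7(1)=1$) and without proof.
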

\begin{corollary} \label{Cor3.5}  For $F(x)=x^{3^r \cdot 5^s}-m$,  if any of the following conditions holds:
	\begin{enumerate}
		\item  $r \ge 3 $ and $m\equiv \pm 1\md {81}$, 
		\item $s\ge 5$ and  $ \ol{m}  \in \{ \ol{1}, \ol{7}, \ol{18}, \ol{24}\}  \md {25} $,
	\end{enumerate}
	then $K$ is non-monogenic.
\end{corollary}
\begin{corollary}\label{Cor3.11}
	For $F(x)=x^{3^r\cdot 11^s}-m$. If any   of the following conditions holds: 
	\begin{enumerate}
		\item $r \ge 1, s\ge 11$ and $m^{10} \equiv 1 \md{11^{12}}$.
		\item $r\ge 2, s \ge 1$ and $m^2 \equiv 1 \md{27}$,
	\end{enumerate}
	then $K$ is non-monogenic.
\end{corollary}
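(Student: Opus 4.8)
The plan is to derive both parts from Theorem~\ref{thgeneral} by exhibiting, in each case, a prime $p$ that is forced to be a common index divisor of $K$; concretely I will apply the theorem with the test degree $d=1$, so the three quantities I must control are $\n=\n_p(m^{p-1}-1)$, the exponent of $p$ in $n$, and the root count $N(1,u,m)$. The last of these is the number of solutions of $x^{u}=\ol m$ in $\F_p$, which is a root count in the cyclic group $\F_p^{\times}$ and is therefore governed by $\gcd(u,p-1)$: whenever $\gcd(u,p-1)=1$ the map $x\mapsto x^{u}$ is a bijection of $\F_p^{\times}$, so there is exactly one solution and $N(1,u,m)=1$.

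For part~(1) I would take $p=11$, writing $n=3^{r}\cdot 11^{s}=u\cdot p^{s}$ with $u=3^{r}$, so that $11$ is the distinguished prime and $s$ its exponent. The congruence $m^{10}\equiv 1\md{11^{12}}$ gives both $11\nmid m$ (hence $p\nmid u\cdot m$) and $\n=\n_{11}(m^{10}-1)\ge 12$, while $s\ge 11$ yields $s+1\ge 12$; thus $\min\{s+1,\n\}\ge 12$. Since $|\F_{11}^{\times}|=10$ and $\gcd(3^{r},10)=1$, the remark above gives $N(1,3^{r},m)=1$, and therefore $\min\{s+1,\n\}\,N(1,3^{r},m)\ge 12>11=N_{11}(1)$. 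Theorem~\ref{thgeneral} then applies with $d=1$ and shows that $K$ is not monogenic.

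For part~(2) I would instead take $p=3$, writing $n=3^{r}\cdot 11^{s}=u\cdot p^{r}$ with $u=11^{s}$. Here $m^{2}\equiv 1\md{27}$ gives $3\nmid m$ and $\n=\n_{3}(m^{2}-1)\ge 3$, and since $|\F_{3}^{\times}|=2$ with $\gcd(11^{s},2)=1$ we again obtain $N(1,11^{s},m)=1$. The decisive step, and the place where I expect the real work to lie, is the strict inequality $\min\{r+1,\n\}\,N(1,11^{s},m)>N_{3}(1)=3$: unlike in part~(1), the margin here is minimal, so the conclusion rests on checking that the hypotheses on $r$ and on the power of $3$ dividing $m^{2}-1$ are exactly strong enough to force $\min\{r+1,\n\}$ to exceed $3$ strictly. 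Should this margin be insufficient at $d=1$, the fallback is to pass to a larger degree $d$ and estimate $N(d,11^{s},m)$ directly from the factorisation of $x^{11^{s}}-\ol m$ over $\F_3$, using the multiplicative order of $3$ modulo the powers of $11$; producing some $d$ with $\min\{r+1,\n\}\,N(d,11^{s},m)>N_{3}(d)$ is the main obstacle.
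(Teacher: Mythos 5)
Your treatment of part (1) is correct and coincides with the paper's argument: the paper proves only Corollary \ref{Cor5.7} in detail and asserts that the other corollaries follow by the same direct application of Theorem \ref{thgeneral} with $d=1$; taking $p=11$, $u=3^r$, you get $N_{11}(1,3^r,m)=1$ because $\gcd(3^r,10)=1$, and $\min\{s+1,\n\}\ge 12>11=N_{11}(1)$, exactly as intended.

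Your hesitation about part (2) is well founded, and the obstruction you isolate is real --- but it is a defect of the corollary as stated rather than of your reading of it. With $p=3$, $u=11^s$, $d=1$ one has $N_3(1,11^s,m)=1$ and $N_3(1)=3$, while the hypotheses $r\ge 2$ and $m^2\equiv 1\md{27}$ give only $\min\{r+1,\n_3(m^2-1)\}\ge 3$; the product is $3$, equal to $N_3(1)$, and Theorem \ref{thgeneral} requires strict inequality. Moreover the count is sharp: the linear factor contributes exactly $\min\{r+1,\n\}$ primes of residue degree $1$ and the remaining irreducible factors of $x^{11^s}-\ol{m}$ over $\F_3$ contribute none, so $L_3(1)=3=N_3(1)$ when $\n_3(m^2-1)=3$ and Lemma \ref{lemma} does not apply. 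Comparing with the paper's worked case (Corollary \ref{Cor5.7}(2) demands $r\ge 5$ and $\n_5\ge 6$ so that $6>5=N_5(1)$), the transposed hypotheses here should be $r\ge 3$ and $m^2\equiv 1\md{81}$; part (2) appears to be off by one in both conditions. Your fallback to larger $d$ repairs this only partially: every nonlinear irreducible factor of $x^{11^s}-\ol{m}$ over $\F_3$ has degree $5$ (the order of $3$ modulo $11$), and since $3^5-1=2\cdot 11^2$ one finds $N_3(5,11^s,m)=24$ for $s\ge 2$, giving $3\cdot 24=72>48=N_3(5)$, so the conclusion does hold for $s\ge 2$; but for $s=1$ one has $N_3(5,11,m)=2$, hence $3\cdot 2=6<48$, and since $1$ and $5$ are the only factor degrees, no choice of $d$ makes the criterion work when $s=1$, $r\ge 2$ and $\n_3(m^2-1)=3$. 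So part (1) of your proposal is complete, while part (2) cannot be closed by the stated method under the stated hypotheses; you were right to flag precisely that inequality as the sticking point.
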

\begin{corollary}\label{Cor5.11}
For $F(x)=x^{5^r\cdot 11^s}-m$. If any   one of the following conditions holds:
\begin{enumerate}
\item $r \ge 1, s \ge 2, m \equiv -1 \md{11}$ and $m^{10} \equiv 1 \md{1331}$.
\item $r \ge 6, s\ge 1$ and $m^4 \equiv 1 \md{5^6}$,
\end{enumerate}
then $K$ is non-monogenic.
\end{corollary}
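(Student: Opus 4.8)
The plan is to derive both parts directly from Theorem \ref{thgeneral} by taking $d=1$; the whole task then reduces to choosing the right prime $p$, reading off the valuation from the given congruence, and counting the \emph{linear} factors of $x^u-\overline{m}$ over $\F_p$. I will use that $N_p(1)=p$ (the monic linear polynomials $x-c$, $c\in\F_p$) and that $N_p(1,u,m)$ equals the number of roots of $x^u-\overline{m}$ in $\F_p$, this polynomial being separable since $p\nmid u$ and $\overline{m}\neq 0$. Throughout I assume $F(x)=x^{5^r\cdot 11^s}-m$ is irreducible, so that $K$ is a pure field of degree $n=5^r\cdot 11^s$.

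For part (1) I would apply the theorem with $p=11$, writing $n=u\cdot p^s$ with $u=5^r$, so that the exponent playing the role of $r$ in Theorem \ref{thgeneral} is $s$; here $11\nmid 5^r\cdot m$ because $m\equiv-1\md{11}$. From $m^{10}\equiv 1\md{11^3}$ one gets $\nu=\nu_{11}(m^{10}-1)\ge 3$, and since $s\ge 2$ we have $\min\{s+1,\nu\}\ge 3$. It then remains to count the roots of $x^{5^r}+1$ in $\F_{11}$ (using $\overline{m}=-1$). Writing elements of $\F_{11}^{*}$ as powers of a primitive root $g$ and using $5^r\equiv 5\md{10}$ together with $-1=g^5$, the equation $\alpha^{5^r}=-1$ reduces to $5k\equiv 5\md{10}$, i.e.\ $k$ odd; this yields exactly $5$ roots, so $N_{11}(1,5^r,m)=5$. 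Hence $\min\{s+1,\nu\}\,N_{11}(1,5^r,m)\ge 3\cdot 5=15>11=N_{11}(1)$, and Theorem \ref{thgeneral} gives the non-monogenity of $K$.

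For part (2) I would apply the theorem with $p=5$, writing $n=u\cdot p^r$ with $u=11^s$; here $5\nmid 11^s\cdot m$ since $m^4\equiv 1\md{5^6}$ forces $5\nmid m$. The same congruence gives $\nu=\nu_5(m^4-1)\ge 6$, and $r\ge 6$ gives $\min\{r+1,\nu\}\ge 6$. Now $x^{11^s}-\overline{m}$ has exactly one root in $\F_5$: because $\gcd(11^s,|\F_5^{*}|)=\gcd(11^s,4)=1$, the map $x\mapsto x^{11^s}$ is a bijection of $\F_5^{*}$, so the nonzero element $\overline{m}$ has a unique preimage. Thus $N_5(1,11^s,m)=1$ and $\min\{r+1,\nu\}\,N_5(1,11^s,m)\ge 6\cdot 1=6>5=N_5(1)$, so Theorem \ref{thgeneral} again applies.

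The only genuinely delicate step is the factor count $N_p(1,u,m)$, i.e.\ controlling the power-residue structure of $\F_p^{*}$: one must verify that the $u$-th power map has the right kernel and image so that $\overline{m}$ acquires enough preimages in $\F_p$. This is exactly where the hypotheses $m\equiv-1\md{11}$ in part (1) and the coprimality $\gcd(11^s,4)=1$ in part (2) enter. Everything else---the lower bounds on $\nu$ and on $\min\{r+1,\nu\}$, the separability of $x^u-\overline{m}$, and the values $N_{11}(1)=11$ and $N_5(1)=5$---is routine.
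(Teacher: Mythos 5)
Your proposal is correct and follows exactly the route the paper intends: the paper omits the proof of this corollary, noting it is a direct application of Theorem \ref{thgeneral} analogous to the proof of Corollary \ref{Cor5.7}, and your argument is precisely that application with $d=1$ (choosing $p=11$, $u=5^r$ for part (1) and $p=5$, $u=11^s$ for part (2), with the correct counts $N_{11}(1,5^r,m)=5$ and $N_5(1,11^s,m)=1$).
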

Recall that a monic polynomial $F(x) \in \mathbb{Z}[x]$ is called monogenic if it is irreducible over $\mathbb{Q}$ and $\mathbb{Z}_K = \mathbb{Z}[\alpha]$, where $K = \mathbb{Q}(\alpha)$ with $F(\alpha) = 0$. However, it may happen that the field $K$ is monogenic even though its defining polynomial $F(x)$ is not (see, e.g., \cite{JonesASM} by Jones). In the following result, we identify families of monogenic pure number fields defined by non-monogenic binomials.
\begin{theorem}\label{monogen}Let $F(x) = x^n - a^u \in \mathbb{Z}[x]$ be a binomial such that $u \ge 2$, $\gcd(u, n) = 1$, $a$ is square-free, and $PD(n) \subseteq PD(a)$. Then $F(x)$ is irreducible over $\mathbb{Q}$.  Let $K := \mathbb{Q}(\alpha)$ be a pure number field generated by a root $\alpha$ of $F(x)$. Then we have the following:	
	\begin{enumerate}
		\item $\mathbb{Z}[\alpha] \neq \mathbb{Z}_K$ (that is, $F(x)$ is non-monogenic).
		\item There exists $\theta \in \widehat{\mathbb{Z}}_K$ such that $\mathbb{Z}_K = \mathbb{Z}[\theta]$; that is, $K$ is monogenic.
	\end{enumerate}
	
\end{theorem}
\begin{remark}\
\begin{enumerate}
	\item Note that in the above theorem, $a^u$ is not square-free.
	\item Recall that in \cite{Gassert}, the case $\mathbb{Z}[\alpha] = \mathbb{Z}_K$ was studied when $u = 1$. In the above theorem, we have $u \ge 2$; here, $\alpha$ is not a GPIB of $\mathbb{Z}_K$, but $\mathbb{Z}_K$ admits a GPIB $\theta$ different from $\alpha$.
\end{enumerate}

\end{remark}

\section{Preliminaries} 
In what follows, let $K=\mathbb{Q}(\alpha)$ be a number field with ring of integers $\mathbb{Z}_K$, where $\alpha \in \mathbb{Z}_K$ is a root of a monic irreducible polynomial $F(x) \in \mathbb{Z}[x]$ of degree $n$. Recall that for any $\theta \in \widehat{\mathbb{Z}}_K$, $\mathbb{Z}[\theta]$ is a $\mathbb{Z}$-submodule of $\mathbb{Z}_K$ of rank $n$. Hence, the index of $\mathbb{Z}[\theta]$ in $\mathbb{Z}_K$ is finite, denoted by $(\mathbb{Z}_K : \mathbb{Z}[\theta])$. It follows that the field $K$ is  monogenic if $(\mathbb{Z}_K : \mathbb{Z}[\theta]) = 1$ for some primitive element $\theta$ of $K$.

The greatest common divisor of the indices of all integral primitive elements of $K$ is called the  index of $K$, denoted by $i(K)$. That is,  
\[
i(K) = \gcd \{\, (\mathbb{Z}_K : \mathbb{Z}[\theta]) \mid \theta \in \widehat{\mathbb{Z}}_K \,\}.
\]
A prime $p$ is called a common index divisor of $K$ if it divides $i(K)$. Since $i(K)=1$ for every  monogenic number field, the existence  of a common index divisor implies that $K$ is non-monogenic.

For a positive integer $d$, let $L_p(d)$ denote the number of distinct prime ideals of $\mathbb{Z}_K$ that divide $p\mathbb{Z}_K$ and have residue degree $d$. Recall also  the number  $N_p(d)$ defined in section 2. The following lemma provides a sufficient condition for a prime $p$ to be a common index divisor of a number field.

\begin{lemma}\label{lemma}$($ \cite[Theorems 4.33 and 4.34]{Na}$)$ With the above notation,  if $L_p(d) > N_p(d)$ for some positive integer $d$, then $p$ is a  common index divisor of $K$. In particular, $K$ is non-monogenic.
\end{lemma}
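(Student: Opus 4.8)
The plan is to establish the contrapositive characterization of common index divisors and then close with a counting (pigeonhole) argument on monic irreducible polynomials over $\F_p$. The single tool I would invoke is the Dedekind index criterion: for any $\th \in \widehat{\mathbb{Z}}_K$ with monic minimal polynomial $g(x) \in \Z[x]$ of degree $n$, the prime $p$ satisfies $p \nmid (\Z_K : \Z[\th])$ if and only if the prime-ideal factorization of $p\Z_K$ can be read off from the reduction $\ol{g}(x) \in \F_p[x]$. Concretely, writing $\ol{g} = \prod_i \bar{\phi}_i^{e_i}$ with the $\bar{\phi}_i$ pairwise distinct monic irreducible factors over $\F_p$, there is a bijection between the $\bar{\phi}_i$ and the primes $\mathfrak{p}_i$ of $\Z_K$ lying above $p$, under which $\deg \bar{\phi}_i$ equals the residue degree $f(\mathfrak{p}_i \mid p)$ (and $e_i$ the ramification index).

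Granting this, suppose toward a contradiction that $p$ is \emph{not} a common index divisor of $K$. By definition of $i(K)$ this means there exists $\th \in \widehat{\mathbb{Z}}_K$ with $p \nmid (\Z_K : \Z[\th])$. Fix such a $\th$ and its minimal polynomial $g$. For the given integer $d$, the number $L_p(d)$ of distinct primes above $p$ of residue degree $d$ then equals, by the correspondence above, the number of distinct monic irreducible factors of $\ol{g}$ of degree $d$. But these factors are pairwise distinct monic irreducible polynomials of degree $d$ in $\F_p[x]$, and there are altogether exactly $N_p(d)$ such polynomials; hence $L_p(d) \le N_p(d)$. This contradicts the hypothesis $L_p(d) > N_p(d)$.

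Therefore no primitive element can have index prime to $p$, i.e. $p \mid (\Z_K : \Z[\th])$ for every $\th \in \widehat{\mathbb{Z}}_K$, so $p \mid i(K)$ and $p$ is a common index divisor of $K$. Since a monogenic field satisfies $i(K) = 1$ (as already recorded above), it follows that $K$ is not monogenic.

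The main obstacle is the Dedekind index criterion itself, namely the assertion that $p \nmid (\Z_K : \Z[\th])$ forces the splitting of $p\Z_K$ to be dictated by $\ol{g}$. I would prove this by localizing at $p$: the hypothesis $p \nmid (\Z_K : \Z[\th])$ is exactly the statement that $\Z[\th]_{(p)} = (\Z_K)_{(p)}$, so the order $\Z_{(p)}[x]/(g) \cong \Z[\th]_{(p)}$ is already the localization of the maximal order. Applying the Kummer--Dedekind theorem to this maximal order, the Chinese Remainder decomposition of $\F_p[x]/(\ol{g}) \cong \prod_i \F_p[x]/(\bar{\phi}_i^{e_i})$ produces precisely the primes above $p$ with residue degrees $\deg \bar{\phi}_i$. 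Once this correspondence is in hand, the remaining steps are the immediate count $L_p(d) \le N_p(d)$ and the observation that one cannot exhibit more than $N_p(d)$ distinct monic irreducibles of degree $d$ over $\F_p$.
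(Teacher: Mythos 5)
Your proof is correct and is essentially the standard argument behind the result: the paper itself gives no proof of this lemma, deferring to \cite[Theorems 4.33 and 4.34]{Na}, and those theorems are exactly the Kummer--Dedekind correspondence plus the pigeonhole count on monic irreducibles of degree $d$ over $\F_p$ that you carry out. Nothing further is needed.
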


To prove our result, we shall use Ore's index theorem, which provides the value of $\nu_p((\mathbb{Z}_K:\mathbb{Z}[\theta]))$ and determines  the factorization of $p\mathbb{Z}_K$. This theorem allows us both to apply the above lemma (to detect non-monogenity) and to check whether $(\mathbb{Z}_K:\mathbb{Z}[\theta])= 1$  (for monogenity). Ore's theorem relies on some Newton polygon techniques. For further details, see  \cite{O} by Ore, \cite{MN92} by Montes and Nart, and \cite{Narprime, Nar} by  Gu\`{a}rdia \emph{et al.}

Following the presentation in \cite{Narprime,Nar} by  Gu\`{a}rdia,  Montes, and  Nart, we briefly summarize the key concepts and outline the application of this method. 
 
 Let $p$ be a prime, and let $\nu_p$ denote the $p$-adic valuation on  $\mathbb{Q}_p(x)$. For any polynomial $F(x) = \sum_{i=0}^n a_i x^i \in \mathbb{Z}_p[x]$, the $p$-adic valuation of $F(x)$ is defined by
 \[
 \nu_p(F(x)) = \min \{\nu_p(a_i) \mid 0 \le i \le n\}.
 \]
Let $\phi(x) \in \mathbb{Z}[x]$ be a monic polynomial whose reduction modulo $p$, $\overline{\phi(x)}$, is irreducible in $\mathbb{F}_p[x]$. 
 Then, the $\phi$-adic development of $F(x)$ is given as follows:  
 \[
 F(x) = a_l(x)\phi(x)^l + a_{l-1}(x)\phi(x)^{l-1} + \cdots + a_1(x)\phi(x) + a_0(x),
 \]
 where $\deg(a_j(x)) < \deg(\phi(x))$ for each $j = 0, 1, \dots, l$.
We consider the following set of points in the euclidean plane: 
\[
\mathfrak{S}_{\{F, \phi\}} = \{ (j, \nu_p(a_j(x))) \mid a_j(x) \neq 0 \text{ and } 0 \le j \le l \}.
\]
The lower convex hull of $\mathfrak{S}_{\{F, \phi\}}$ is called the    $\phi$-Newton polygon of $F(x)$, denoted by $N_{\phi}(F)$. If it  consists of $g$  distinct sides $S_k$ with distinct slopes $\lambda_k \in \mathbb{Q}$, then we write
\[
N_{\phi}(F) = S_1 + S_2 + \cdots + S_g.
\]
 In particular, the $\phi$-principal Newton polygon of $F(x)$, denoted by $N_{\phi}^+(F)$, is the polygon formed by the sides of $N_{\phi}(F)$ that  have negative slopes. 
 
Every side $S$ of $N_{\phi}^+(F)$, joining the points $(s, \nu_p(a_s(x)))$ and $(r, \nu_p(a_r(x)))$, has length $l(S) := r - s$ and height $h(S) := \nu_p(a_s(x)) - \nu_p(a_r(x))$. Its slope is $\lambda_{S} := -\frac{h(S)}{l(S)} < 0$, its degree is $d(S) := \gcd(l(S), h(S))$, and its ramification index is $e(S) := \frac{l(S)}{d(S)}$. Let $d = d(S)$ and $e = e(S)$, and define $\mathbb{F}_{\phi} := \mathbb{F}_p[x]/(\overline{\phi(x)})$. Then, the residual polynomial of $F(x)$ associated to $S$ (or to $\lambda_S$) is
\[
R_{\lambda_S}(F)(y) = c_{s+de}y^d + c_{s+(d-1)e}y^{d-1} + \cdots + c_{s+e}y + c_s \in \mathbb{F}_{\phi}[y],
\]
 where \[
 c_{s+ie} = \frac{a_{s+ie}(x)}{p^{\nu_p(a_{s+ie}(x))}} \bmod (p, \phi(x)), \quad \text{for all } i = 0, 1, \dots, d.
 \] 
The  polynomial $F(x)$ is $\phi$-regular if each residual polynomial $R_{\lambda_{S}}(F)(y)$ corresponding to a side $S_{\lambda}$ of $N_{\phi}^+(F)$ is separable in $\mathbb{F}_{\phi}[y]$. Subsequently, $F(x)$ is  $p$-regular if it is $\phi$-regular for every monic polynomial  $\phi(x) \in \mathbb{Z}[x]$ whose reduction modulo  $p$ is irreducible and  divides $\overline{F(x)}$ in $\mathbb{F}_p[x]$.

We now have all the necessary tools to state Ore's theorem (see \cite{Narprime, Nar, MN92, O}).
\begin{theorem}[Ore's Theorem]\label{Ore}
With the above notations, suppose that:
	\begin{enumerate}[label=\roman*)]
		\item 
	$
		\overline{F(x)} = \prod_{i=1}^t \overline{\phi_i(x)}^{\,l_i},
$
		where the $\overline{\phi_i(x)}$ are distinct monic irreducible polynomials in $\mathbb{F}_p[x]$.
		
		\item For each $i=1, \ldots, t$, 	$
		N_{\phi_i}^+(F) = S_{i1} + S_{i2} + \dots + S_{ir_i}
		$.
		
		\item For each $i=1, \ldots, t$ and $j=1, \ldots, r_i$, 
	$
		R_{\lambda_{ij}}(F)(y) = \prod_{s=1}^{s_{ij}} \psi_{ijs}(y)^{\,n_{ijs}},
	$
	where $\psi_{ijs}(y)$	are distinct irreducible polynomials in  $\mathbb{F}_{\phi_i}[y]$.
	\end{enumerate}
	
	Then, the following hold:
	\begin{enumerate}
		\item 
		$
		\nu_p((\mathbb{Z}_K : \mathbb{Z}[\alpha])) \ge \sum_{i=1}^t \mathrm{ind}_{\phi_i}(F),
	$
		with equality if $F(x)$ is $p$-regular (i.e., $n_{ijs} = 1$ for all $i,j,s$).
		
		\item If $F(x)$ is $p$-regular,  then
		\[
		p \mathbb{Z}_K = \prod_{i=1}^t \prod_{j=1}^{r_i} \prod_{s=1}^{s_{ij}} \mathfrak{p}_{ijs}^{\,e_{ij}},
		\]
		where $e_{ij}$ is the ramification index of the side $S_{ij}$, $\mathfrak{p}_{ijs}$ is a prime ideal of $\mathbb{Z}_K$ whose residue degree over $p$ is 
$
		f_{ijs} = \deg(\phi_i) \cdot \deg(\psi_{ijs}).
	$
	\end{enumerate}
\end{theorem}

\begin{example}\label{Example}
	
Consider the quartic pure field $K=\Q(\sqrt[4]{17})$. In this case, we have  $\a=\sqrt[4]{17}$ and $F(x)=x^4-17$. Let $p=2$. Reducing modulo $2$, we get  $F(x)=\ph_1(x)$ in $\F_2[x]$, where $\ph_1(x)=x-1$. The $\ph_1$-adic development of $F(x)$ is 
\[
F(x)=-16+4\phi_1(x)+6\phi_1(x)^2+4\phi_1(x)^3+\phi_1(x)^4.
\]
Here, we have \[
\nu_0=\nu_2(-16)=4, \quad 
\nu_1=\nu_2(4)=2, \quad 
\nu_2=\nu_2(6)=1, \quad 
\nu_3=\nu_2(4)=2, \,\,
\text{and} \,\, \nu_4=\nu_2(1)=0.
\] It follows that $N_{\ph_1}^{+}(F)=S_{11}+S_{12}+S_{13}$ has three distinct  sides of degree $1$ each,  joining the points $(0, 4), (1, 2), (2, 1)$ and $(4, 0)$ (see FIGURE 1). Moreover, for $j=1,2,3$, the residual polynomial $
R_{\lambda_{1j}}(F)(y)=y+1
$
is of degree $1$. Hence, it is separable over $\mathbb{F}_{\phi_1}\simeq \mathbb{F}_2$. Therefore, $F(x)$ is $2$-regular.  By Theorem~\ref{Ore}, we have  
\[
2\mathbb{Z}_K = \mathfrak{p}_{111} \cdot \mathfrak{p}_{121} \cdot \mathfrak{p}_{131}^2 
\]
where $\mathfrak{p}_{1j1} $  are distinct prime ideals of $\Z_K$ with residue degree $f(\mathfrak{p}_{1j1}/2)=1$ for $j=1,2,3$.
Moreover,   we have 
\[
\nu_2\big((\mathbb{Z}_K : \mathbb{Z}[\alpha])\big) = \operatorname{ind}_{\phi_1}(F) = 3.
\]
In this case, by Lemma \ref{lemma}, the prime $2$ divides $i(K)$. Hence, $K$ is non-monogenic.

\end{example}

 \begin{figure}[htbp]

	\centering
	
	\begin{tikzpicture}[x=2cm,y=0.35cm]
		\draw[latex-latex] (0,5) -- (0,0) -- (4.25,0) ;

		\draw[thick] (0,0) -- (-0.2,0);
		\draw[thick] (0,0) -- (0,-0.5);
		
		\draw[thick,red] (1,-2pt) -- (1,2pt);
		\draw[thick,red] (2,-2pt) -- (2,2pt);
			\draw[thick,red] (3,-2pt) -- (3,2pt);
			\draw[thick,red] (4,-2pt) -- (4,2pt);
		%	\draw[thick,red] (5,-2pt) -- (5,2pt);
		%	\draw[thick,red] (6,-2pt) -- (6,2pt);
		%	\draw[thick,red] (7,-2pt) -- (7,2pt);
		%	\draw[thick,red] (8,-2pt) -- (8,2pt);
		%\draw[thick,red] (9,-2pt) -- (9,2pt);
		\draw[thick,red] (-2pt,1) -- (2pt,1);
		\draw[thick,red] (-2pt,2) -- (2pt,2);
			\draw[thick,red] (-2pt,3) -- (2pt,3);
			\draw[thick,red] (-2pt,4) -- (2pt,4);	
		%\draw[thick,red] (-2pt,5) -- (2pt,5);	
		\node at (1,0) [below ,blue]{\footnotesize  $1$};
		\node at (2,0) [below ,blue]{\footnotesize $2$};
			\node at (3,0) [below ,blue]{\footnotesize  $3$};
			\node at (4,0) [below ,blue]{\footnotesize  $4$};
		%	\node at (5,0) [below ,blue]{\footnotesize  $5$};
		%	\node at (6,0) [below ,blue]{\footnotesize  $6$};
		%	\node at (7,0) [below ,blue]{\footnotesize $7$};
		%	\node at (8,0) [below ,blue]{\footnotesize  $8$};
		\node at (0,1) [left ,blue]{\footnotesize  $1$};
		\node at (0,2) [left ,blue]{\footnotesize  $2$};
		\node at (0,3) [left ,blue]{\footnotesize  $3$};
		\node at (0,4) [left ,blue]{\footnotesize  $4$};
		%\node at (0,5) [left ,blue]{\footnotesize  $5$};
		\draw[thick, mark = *] plot coordinates{(0,4) (1,2) (2,1) (4,0) };
		%\draw[thick, only marks, mark=*] plot coordinates{ (2,1) (3,1)  };
		%\draw[thick, only marks, mark=*] plot coordinates{(3,3) (1,3) (2,2) (5,3) (7,3) (6,2) };	
		\node at (0.5,2.8) [above  ,blue]{\footnotesize $S_{1}$};
		\node at (1.5,1.2) [above   ,blue]{\footnotesize $S_{2}$};
		\node at (3,0.3) [above   ,blue]{\footnotesize $S_{3}$};
	\end{tikzpicture}
	\caption{ \small $N_{\ph_1}^+(F)$ with respect to $\n_2$.}
\end{figure}
\section{Proofs of the main results}
In this section, we prove our results. To prove  Theorem~\ref{thgeneral}, we will use Lemma~\ref{Polygon} below. Let $F(x) = x^n - m$, and let $p$ be a prime. Assume that $p$ divides $n$ but does not divide $m$, and write $n = u \cdot p^r$. Let  $\ph(x) \in \Z[x]$ be a monic irreducible polynomial.  Then 
$\ol{\ph(x)}$  is an irreducible factor of $\ol{F(x)}$ in $\F_p[x]$ if and only if  $\overline{\ph(x)}$ is an  irreducible factor of  $\overline{x^{u}-m}$ in $\F_p[x]$. Let 
\begin{itemize}
	\item $x^{u}-m = \phi(x)\,U(x) + p\,T(x)$, where $U(x)$ and $T(x)$ are two polynomials in $\mathbb{Z}[x]$ such that $\overline{\phi(x)}$ does not divide $\overline{U(x)T(x)}$.
	\item $H(x) = m^{p^r-1}T(x) + \frac{1}{p^{r+1}} \sum_{j=0}^{p^r-2} \binom{p^r}{j} m^j (p T(x))^{p^r-j}$.
	\item Let $V(x)$ and $R(x)$ be the quotient and the remainder, respectively, upon the Euclidean division of $H(x)$ by $\phi(x)$.
	\item $A_0(x) = p^{r+1} R(x) + m^{p^r} - m$, and $\nu_0 = \nu_p(A_0(x))$.
\end{itemize}
In \cite{BDB}, Ben Yakkou and Didi  proved the following result:
\begin{lemma}\label{Polygon}
	Let $F(x)$, $p$, $\nu_0$, and $\phi(x)$ be as above. Then $N_{\phi}^{+}(F)$ is the Newton polygon joining the points 
	\[
	\{(0, \nu_0)\} \cup \{(p^j, r-j) \mid 0 \le j \le r\}
	\]
	in the euclidean plane.
\end{lemma}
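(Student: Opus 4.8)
The plan is to compute the $\ph$-adic development of $F$ explicitly by a two-stage expansion and then read the $p$-adic valuations of its coefficients off the result. Set $w:=x^u-m$, so that $F(x)=(w+m)^{p^r}-m$, and expand by the binomial theorem:
\[
F=(m^{p^r}-m)+\sum_{j=1}^{p^r}\binom{p^r}{j}m^{p^r-j}\,w^{\,j}.
\]
Since $p\nmid m$, the standard (Kummer) formula for the valuation of a binomial coefficient gives $\n_p\big(\binom{p^r}{j}m^{p^r-j}\big)=\n_p\binom{p^r}{j}=r-\n_p(j)$ for $1\le j\le p^r$. I would then substitute $w=\ph U+pT$ and expand $w^{\,j}=\sum_i\binom{j}{i}U^{i}(pT)^{j-i}\ph^{i}$, so that the coefficient of $\ph^{i}$ in $F$, before the degrees are reduced, is a $\Z$-combination of terms $\binom{p^r}{j}m^{p^r-j}\binom{j}{i}U^{i}(pT)^{j-i}$ with $j\ge i$, each of valuation at least $(r-\n_p(j))+(j-i)$. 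Passing to the genuine development $F=\sum_i a_i(x)\ph^{i}$ with $\deg a_i<\deg\ph$ only requires repeated Euclidean division by the monic $\ph$, which cannot lower the common $p$-content; hence every point $(i,\n_p(a_i))$ of $\nph{F}$ lies on or above the lower convex envelope of the prescribed points.

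To see that this lower bound is sharp at the marked abscissae, I would isolate at $i=p^s$ the contribution of $j=p^s$, namely $\binom{p^r}{p^s}m^{p^r-p^s}U^{p^s}$, of valuation exactly $r-\n_p(p^s)=r-s$. Because $\ol{\ph}\nmid\ol{U}$ (recall $p\nmid u$, so $\ol{x^u-m}$ is separable and $\ol{\ph}$ occurs in it to multiplicity one), the reduction $U^{p^s}\bmod\ph$ is nonzero modulo $p$, so this term is not cancelled and $\n_p(a_{p^s})=r-s$, which is the second coordinate $k-s$ of the prescribed point (forcing $k=r$). A direct estimate of $\min_{i\le j\le p^r}\{(r-\n_p(j))+(j-i)\}$ shows that for $i$ not a power of $p$ the corresponding point lies strictly above the convex hull of the $(p^s,r-s)$; together with $\ol F=(\ol{x^u-m})^{p^r}$, which fixes the horizontal length of $\npp{F}$ at $p^r$, this identifies $(1,r),(p,r-1),\dots,(p^r,0)$ as the relevant vertices coming from the non-constant coefficients.

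The one remaining value is the height $\n_0$ of the leftmost point, and here the auxiliary polynomials $H,R,A_0$ do the work. The constant coefficient is $a_0=F\bmod\ph$; since $w\equiv pT\pmod{\ph}$ one has $a_0\equiv(m^{p^r}-m)+\sum_{j=1}^{p^r}\binom{p^r}{j}m^{p^r-j}(pT)^{j}\pmod{\ph}$. The definition of $H$ is precisely the identity
\[
\sum_{j=1}^{p^r}\binom{p^r}{j}m^{p^r-j}(pT)^{j}=p^{r+1}H(x),
\]
obtained because the $j=1$ term equals $\binom{p^r}{1}m^{p^r-1}(pT)=p^{r+1}m^{p^r-1}T$ and reindexing the tail $j\ge2$ by $i=p^r-j$ reproduces $p^{r+1}$ times the sum defining $H$. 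Writing $H=\ph V+R$ and reducing modulo $\ph$ then gives $a_0=(m^{p^r}-m)+p^{r+1}R=A_0(x)$, which already has degree $<\deg\ph$, so $\n_0=\n_p(A_0)$. Assembling the three computations, $\npp{F}$ is the lower convex envelope of $\{(0,\n_0)\}\cup\{(p^{s},r-s)\mid 0\le s\le r\}$, as claimed.

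I expect the genuine obstacle to be the bookkeeping in passing from the formal $\ph$-power expansion to the reduced development: one must check that the carries produced by dividing the interior coefficients by $\ph$ neither destroy the exact valuation $r-s$ at the vertices nor pull any non-vertex point below the envelope, despite many indices $j\ge i$ contributing to a single $\ph^{i}$. The constant-term identity $\sum_{j\ge1}\binom{p^r}{j}m^{p^r-j}(pT)^j=p^{r+1}H$ is the clean, fully verifiable core of the argument; the interior degree-reduction is where the care is required.
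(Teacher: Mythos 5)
The paper never proves this lemma itself: it is imported verbatim from \cite{BDB}, so there is no in-paper argument to compare yours against. Your route is, however, the natural one and surely the intended one. The two-stage expansion is right: writing $F=(w+m)^{p^r}-m$ with $w=x^u-m$ and substituting $w=\ph U+pT$ produces an unreduced $\ph$-expansion whose coefficient at $\ph^{p^s}$ is dominated by the single term $\binom{p^r}{p^s}m^{p^r-p^s}U^{p^s}$, of exact valuation $r-s$ (Kummer) and with residual a unit times $\ol{U}^{p^s}\neq 0$ in $\F_{\ph}$ because $\ol{\ph}\nmid\ol{U}$ is part of the setup; and your constant-term identity $\sum_{j\ge 1}\binom{p^r}{j}m^{p^r-j}(pT)^j=p^{r+1}H(x)$ is exactly the definition of $H$ after the reindexing $j\mapsto p^r-j$, so $a_0=F\bmod\ph=(m^{p^r}-m)+p^{r+1}R=A_0$ on the nose. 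Your observation that this forces $k=r$ correctly repairs the undefined $k$ in the statement.

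The one step you leave informal is the one you flag yourself: the effect of reducing the interior coefficients modulo $\ph$. It closes in two lines. Since $\ph$ is monic, each division $b_j=q_j\ph+r_j$ satisfies $\n_p(q_j),\n_p(r_j)\ge\n_p(b_j)$ and the carry $q_j$ moves only to the \emph{right} (to abscissa $j+1$); as the claimed hull is non-increasing in the abscissa, every reduced coefficient stays on or above it. For exactness at the vertices: the carry out of abscissa $0$ is exactly $p^{r+1}V(x)$, of valuation at least $r+1$, while for $1\le j<p^s$ your lower bound already gives $\n_p(b_j)\ge r-s+1>r-s$; an easy induction then shows every carry entering abscissa $p^s$ has valuation strictly larger than $r-s$, so neither the valuation nor the nonvanishing of the residual coefficient there is disturbed. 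Alternatively, you can skip the bookkeeping entirely by invoking the admissible $\ph$-expansion lemma of Gu\`{a}rdia, Montes and Nart \cite{Nar}, which says precisely that an expansion whose vertex coefficients have nonzero residual computes $\npp{F}$. With either patch your proof is complete and, as far as one can tell, coincides with the argument of \cite{BDB}.
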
	
We  recall also that  if a prime   $p$ does not divide a non-zero rational integer $m$, then for every positive integer $r$,  $\n=  \nu_p (m^{p^r}- m )=\nu_p (m^{p^r - 1}- 1 ) = \nu_p{ (m^{p -1} - 1 )}$ (e.g., \cite{BF} or Remark on p. 8 of \cite{BFC} by Ben Yakkou \emph{et al.}).

\begin{proof}[\textbf{Proof of Theorem \ref{thgeneral}}]
Under the assumptions of Theorem \ref{thgeneral},  let $\ph(x)$ be a monic irreducible factor of $F(x)$ modulo $p$ of degree $d$. We distinguish two cases:
 \begin{itemize}
     \item If $\nu \ge r+1$, then $\nu_0 \ge \min\{\nu, r+1\} \ge r+1$. It follows from Lemma~\ref{Polygon} that the principal Newton polygon 
     \[
     N_{\phi}^{+}(F) = S_1 + \cdots + S_{r+1}
     \] 
     has $r+1$ distinct sides, each of degree $1$. More precisely, $N_{\phi}^{+}(F)$ is the lower convex hull of the points $(0, \nu_0)$, $(1, r)$, $(p, r-1)$, $\ldots$, $(p^r, 0)$ (see FIGURE~2).   Thus, for $j = 1, \ldots, r+1$, the residual polynomial $R_{\lambda_j}(F)(y)$ is separable in $\mathbb{F}_{\phi}[y]$, since it is of  degree $1$. By Ore's Theorem (Theorem~\ref{Ore}), the factor $\phi(x)$ provides  $r+1$ distinct prime ideals of $\mathbb{Z}_K$, each of residue degree 
$
     \deg(R_{\lambda_j}(F)(y)) \times \deg(\phi(x)) = 1 \times d = d,
   $
     lying above the prime $p$. Since the number of monic irreducible factors of $\overline{F(x)}$ is $N_p(d,u,m)$, there exist $(r+1) \times N_p(d,u,m)$ distinct prime ideals of $\mathbb{Z}_K$ dividing $p\mathbb{Z}_K$.
     
     \item If $r \ge \nu$, then $\nu_0 \ge \min\{\nu, r+1\} = \nu$. Thus, by Lemma~\ref{Polygon}, the Newton polygon 
     \[
     N_{\phi}^{+}(F) = S_1 + \cdots + S_{\nu}
     \] 
     has $\nu$ distinct sides, each of degree $1$. Explicitly, $N_{\phi}^{+}(F)$ is the lower convex hull of the points 
     \[
     (0, \nu), \; (p^{\,r-\nu+1}, \nu-1), \; (p^{\,r-\nu+2}, \nu-2), \; \ldots, \; (p^r, 0).
     \]     Hence, $R_{\lambda_j}(F)(y)$, for $j = 1, \ldots, \nu$, is separable. Therefore, for $k = 1, \ldots, N_p(d, u, m)$, the polynomial $F(x)$ is $\phi_k$-regular for every factor $\phi_k(x)$ of $F(x)$ modulo $p$ of degree $d$. Moreover,   the ramification indices of the sides of $N_{\phi}^{+}(F)$ are 
     \[
     e_1 = e(S_1) = p^{\,r-\nu+1}, \quad 
     e_j = e(S_j) = (p-1)\,p^{\,r-\nu+j-1}, \quad j = 2, \ldots, \nu.
     \]    Applying  Theorem~\ref{Ore}, we obtain
     \[
     p \, \mathbb{Z}_K = \prod_{k=1}^{N_p(d,u,m)} \prod_{j=1}^{\nu} \mathfrak{p}_{kj}^{\,e_j} \, \mathfrak{a},
     \]
     where $\mathfrak{a}$ is a proper ideal of $\mathbb{Z}_K$, and $\mathfrak{p}_{kj}$ is a prime ideal of $\mathbb{Z}_K$ with residue degree 
   \[
   f(\mathfrak{p}_{kj} / p) = d, \quad 
   k = 1, \ldots, N_p(d,u,m), \quad 
   j = 1, \ldots, \nu.
   \]
    Hence, there are at least $\nu \times N_p(d,u,m)$ distinct  prime ideals of residue degree $d$ lying above the prime  $p$.
     
     We conclude from Lemma~\ref{lemma} that  if 
   $
     \min\{\nu, r+1\} \times N_p(d,u,m) > N_p(d),
  $
     then $p$ divides $i(K)$, and therefore $K$ is non-monogenic.

 \end{itemize}
\end{proof}	
\begin{figure}[htbp] 
	\centering
	\begin{tikzpicture}[x=0.4cm,y=0.5cm]
		\draw[latex-latex] (0,6.5) -- (0,0) -- (29,0) ;
		\draw[thick] (0,0) -- (-0.5,0);
		\draw[thick] (0,0) -- (0,-0.5); 
		%	\draw[thick,red] (1,-2pt) -- (1,2pt);
		\draw[thick,red] (3,-2pt) -- (3,2pt);
		\draw[thick,red] (6,-2pt) -- (6,2pt);
		\draw[thick,red] (11,-2pt) -- (11,2pt);
		\draw[thick,red] (18,-2pt) -- (18,2pt);
		\draw[thick,red] (28,-2pt) -- (28,2pt);
		\draw[thick,red] (-2pt,1) -- (2pt,1);
		\draw[thick,red] (-2pt,2) -- (2pt,2);
		%	\draw[thick,red] (-2pt,3) -- (2pt,3);
		\draw[thick,red] (-2pt,3.7) -- (2pt,3.7);
		\draw[thick,red] (-2pt,5.5) -- (2pt,5.5);
		%\draw[thick,red] (-2pt,5.5) -- (2pt,5.5);
		%	\draw[thick,red] (-2pt,6) -- (2pt,6);
		\node at (0,0) [below left,blue]{\footnotesize  $0$};
		%\node at (1,0) [below ,blue]{\footnotesize  $1$};
		\node at (3,0) [below ,blue]{\footnotesize $1$};
		\node at (6,0) [below ,blue]{\footnotesize  $p$};
		\node at (11,0) [below ,blue]{\footnotesize  $p^{r-2}$};
		\node at (18,0) [below ,blue]{\footnotesize  $p^{r-1}$};
		\node at (28,0) [below ,blue]{\footnotesize  $p^{r}$};
		\node at (0,1) [left ,blue]{\footnotesize  $1$};
		\node at (0,2) [left ,blue]{\footnotesize  $2$};
		%	\node at (0,3) [left ,blue]{\footnotesize  $3$};
		\node at (0,3.7) [left ,blue]{\footnotesize  $r$};
		%	\node at (0,5) [left ,blue]{\footnotesize  $\mu$};
		\node at (0,5.5) [left ,blue]{\footnotesize  $\n_0 \ge \n \ge r+1$};
		%	\node at (0,5.5) [left ,blue]{\footnotesize  $r+1$};
		%	\node at (0,6) [left ,blue]{\footnotesize  $\delta \ge 6$};
		\draw[thick,mark=*] plot coordinates{(0,5.5) (3,3.7)};
		\draw[thick,mark=*] plot coordinates{(3,3.7) (6,3)};
		%\draw[thick,mark=*] plot coordinates{(6,3) (10,2)};
		\draw[thick,mark=*] plot coordinates{(11,2) (18,1)};
		\draw[thick,mark=*] plot coordinates{(28,0) (18,1)};
		%	\draw[thick, dashed] plot coordinates{(1.4,4.8) (2.6,4.1) };
		\draw[thick, dashed] plot coordinates{(6.5,2.9) (10.5,2.1) };
		\node at (0.5,4.7) [above right  ,blue]{\footnotesize  $S_{1}$};
		\node at (4.5,3.2) [above right  ,blue]{\footnotesize  $S_{2}$};
		\node at (14,1.4) [above right  ,blue]{\footnotesize  $S_{r}$};
		\node at (23,0.3) [above right  ,blue]{\footnotesize  $S_{r+1}$};
		%\node at (50,0.3) [above right  ,blue]{\footnotesize  $S_{5}$};
	\end{tikzpicture}
	\caption{    \small  $\npp{F}$  when  $\nu \ge r+1$.\hspace{5cm}}
\end{figure}
The proofs of Corollaries~\ref{Cor5.7}, \ref{Cor3.5}, \ref{Cor3.11}, and \ref{Cor5.11} are similar. Since they follow directly from Theorem~\ref{thgeneral}, we suggest presenting only the proof   of Corollary~\ref{Cor5.7}.

\newpage
\begin{proof}[\textbf{Proof of Corollary  \ref{Cor5.7}}]\
	\begin{enumerate}
	\item  Assume that $r \ge 1$, $s \ge 7$, and $m^6 \equiv 1 \pmod{7^8}$. Here, we have
	\[
	F(x) = x^{5^r \cdot 7^s} - m \equiv (x^{5^r} - m)^{7^s} \equiv (\phi_1(x) V(x))^{7^s}\pmod{7},
	\]
	with $\phi_1(x) = x - b_m$ and $V(b_m) \not\equiv 0 \pmod{7}$ (since $x^{5^r} - m$ is square-free in $\mathbb{F}_7[x]$, because $7$ does not divide $5^r$). 	Precisely, modulo the prime $7$, we have $b_1 = 1$, $b_2 = 4$, $b_3 = 5$, $b_4 = 2$, $b_5 = 3$, and  $b_6 = 6$. Hence, $\phi_1(x)$ is an irreducible factor of $x^{5^r} - m$ modulo $7$. According to the notations of Theorem~\ref{thgeneral}, for $d = 1$, we have $N_7(1, 5^r, m) = 1$. Notice also that $N_7(1) = 7$.	By applying Theorem~\ref{thgeneral}, $K$ is non-monogenic if the condition
$
	\min\{ s+1, \nu_7(m^6 - 1) \} \times 1 > 7
$
	holds, which is equivalent to $s \ge 7$ and $m^6 \equiv 1 \pmod{7^8}$.

	\item  Suppose that $r \ge 5$, $s \ge 1$, and $m^4 \equiv 1 \pmod{5^6}$. In this case, for $p = 5$ and $d = 1$, we have $N_5(1, 7^s, m) = 1$ and $N_5(1) = 5$. Then, similarly to the previous case, by interchanging the roles of $r$ and $s$ and applying Theorem~\ref{thgeneral}, we obtain the desired result.
	
	\end{enumerate}

\end{proof}
Now we proceed to the proof of Theorem \ref{monogen}.
\begin{proof}[\textbf{Proof of Theorem \ref{monogen}}]
Let $p$ be a prime dividing $a$. Then $F(x) \equiv \phi_1(x)^n \pmod{p}$, where $\phi_1(x) = x$. Note that $\nu_p(a^u) = u$, since $a$ is square-free. As $\gcd(u, n) = 1$, the Newton polygon $N_{\phi_1}^{+}(F) = S_1$ has only one side of degree $1$, joining the points $(0, u)$ and $(n, 0)$. Hence, $\deg(S_1) = \gcd(u, n) = 1$, so the degree of the residual polynomial $R_{\lambda_1}(F)(y)$ is $1$. By the theorem of the residual polynomials (see \cite{Nar}), $F(x)$ is irreducible over $\mathbb{Q}_p$, and therefore it is irreducible over $\mathbb{Q}$.

	\begin{enumerate}
		\item Since $\deg(R_{\lambda_1}(F)(y)) = 1$, the polynomial $R_{\lambda_1}(F)(y)$ is separable over $\mathbb{F}_{\phi_1}[y]$. Hence, $F(x)$ is $\phi_1$-regular, and consequently, it is $p$-regular. By  Theorem~\ref{Ore}(1) and Pick's Theorem,  we have
		\[
		\nu_p(\mathbb{Z}_K : \mathbb{Z}[\alpha]) \ge \operatorname{ind}_{\phi_1}(F) = \frac{(n-1)(u-1)}{2} \ge 2.
		\]
		It follows that $\mathbb{Z}[\alpha] \subsetneq \mathbb{Z}_K$.
		\item  Let $(t, s) \in (\mathbb{N}^*)^2$ be a positive solution of the Diophantine equation $u x - n y = 1$, and set 
	$
		\theta = \frac{\alpha^t}{a^s}.
	$
		Then we have
		\[
		\theta^n = \left(\frac{\alpha^t}{a^s}\right)^n = \frac{\alpha^{tn}}{a^{sn}} = \frac{(\alpha^n)^t}{a^{sn}} = \frac{a^{ut}}{a^{sn}} = a^{ut - ns} = a.
		\]Thus, $G(\theta) = 0$, where $G(x) = x^n - a \in \mathbb{Z}[x]$. Since $a$ is square-free, $F(x)$ is irreducible over $\mathbb{Q}$ (because it is a $q$-Eisenstein-type polynomial for any prime $q$ dividing $a$).\\
		\\ Consider the field $K' = \mathbb{Q}(\theta)$. Clearly, $K' \subseteq K$. On the other hand, $\alpha^t = a^s \theta \in K'$. It follows that
	\[
	\alpha = \alpha^1 = \alpha^{ut - ns} = \frac{\alpha^{ut}}{\alpha^{ns}} = \frac{(\alpha^t)^u}{(\alpha^n)^s} = \frac{(\alpha^t)^u}{a^s} \in K'.
	\]	Therefore, $K=\Q(\a)=\Q(\th)$. By \cite[Propositions 2.9 and 2.13]{Na}, we have
	\begin{equation*}\label{discriminant}
		\Delta(G) = n^n \cdot a^{n-1} = (\mathbb{Z}_K : \mathbb{Z}[\theta])^2 \cdot D_K,
	\end{equation*}
	where $D_K$ is the field discriminant of $K$.\\
	\\
	Let $q$ be a prime. Since $PD(n) \subseteq PD(a)$, it follows from the above relation that if a prime $q$ divides the index $(\mathbb{Z}_K : \mathbb{Z}[\theta])$, then $q$ divides $a$. Therefore, to show that $\mathbb{Z}_K = \mathbb{Z}[\theta]$, it suffices to prove that $q \nmid (\mathbb{Z}_K : \mathbb{Z}[\theta])$ for every $q \in PD(a)$.\newline
	\\ Let $q \in PD(a)$. We have $G(x) \equiv \phi_2(x)^n \pmod{q}$, where $\phi_2(x) = x$. Since $a$ is square-free, $\nu_q(a) = 1$. It follows that the $\phi_2$-Newton polygon of $G$ with respect to $q$, $N_{\phi_2}^{+}(G) = T_1$,  has only one side of degree 1, joining the points $(0, 1)$ and $(n, 0)$. By applying Ore's Theorem to $G(x)$, we conclude that $\nu_q((\mathbb{Z}_K : \mathbb{Z}[\theta])) = 0$. This completes the proof of the theorem.

	\end{enumerate}
\end{proof}
\section{Application to the existence of Canonical number systems}
Let $N_{K/\Q}$ denote the norm function over $K$. The pair $(\theta, N_{\theta})$, where $\theta \in \Z_K$ and 
\[
N_{\theta} = \{0,1,\ldots, |N_{K/\Q}(\theta)|-1\},
\]
is called a canonical number system (CNS) in $\mathbb{Z}_K$ if every element of $\mathbb{Z}_K$ can be uniquely represented in the form
$
a_0 + a_1 \theta + \cdots + a_l \theta^l,
$
with coefficients $a_i \in N_{\theta}$ for all $i=0,\ldots,l$, and $a_l \neq 0$.
  We call $\theta$ the base of the number system, $N_{\theta}$ the set of digits, and the minimal polynomial $F(x)$ of $\theta$ the CNS polynomial. For further details and applications, we refer to \cite{EGP} by Evertse \emph{et al.} and \cite{Prthocrypto} by Peth\H{o}. The most important connection between CNS and monogeneity in number fields is given by the following result (see \cite{Kovacs} by Kov\'acs, and \cite{KovacsPetho} by Kov\'acs and Peth\H{o}).
  
\begin{proposition} \label{Kov}
Let $K$ be a number field of degree $n \geq 3$. Suppose $\theta \in \mathbb{Z}_{K}$ has minimal polynomial
$
F(x) = x^{n} + a_{n-1}x^{n-1} + \cdots + a_{0},
$
with coefficients satisfying
\[
1 \leq a_{n-1} \leq \cdots \leq a_{0}, \, a_{0} \geq 2, \, \text{and} \, |N_{K/\mathbb{Q}}(\theta)| > 2.
\]
Then the pair $(\theta, N_{\theta})$ forms a canonical number system in $\mathbb{Z}_{K}$ if and only if
$
\mathbb{Z}_{K} = \mathbb{Z}[\theta].
$
\end{proposition}
Based on the above proposition, Hameed, Nakahara, and Husnine identified in Theorems~2.1 and~3.1 of \cite{HNHCNS} the CNS polynomials of the forms $x^{2^r} - m$ and $x^n - m$, under the condition that $m$ is square-free. Combining the above proposition with our results, we then obtain the following:
\begin{itemize}
	\item Under the hypotheses of Theorem~\ref{monogen}, $\mathbb{Z}_{K}$ admits a CNS, and such a system can be explicitly constructed by combining Proposition~\ref{Kov} with the proof of Theorem~\ref{monogen}. In this context, our results extend those given in \cite{HNHCNS}.
	\item Under the hypotheses of Theorem~\ref{thgeneral}, $\mathbb{Z}_{K}$ does not admit any CNS.
\end{itemize}
To illustrate our results, we present the following examples.
\newpage
	\begin{example}
		Let $F(x) = x^6 - m^5$ with $m \neq 0, \pm 1$, $m$ square-free, and divisible by 6, and let $K = \mathbb{Q}(\sqrt[6]{m^5})$ (here $\alpha = \sqrt[6]{m^5}$). According to the notations in the proof of Theorem~\ref{monogen}, we have $t = 5$, $s = 4$, and
	$
		\theta = \frac{(\sqrt[6]{m^5})^5}{m^4}.
$
		By Theorem~\ref{monogen}, $\theta$ generates a power integral basis of $\mathbb{Z}_K$. Note that the minimal polynomial of $\theta$ is $G(x) = x^6 - m$. By \cite[Propositions 2.9 and 2.13]{Na}, we have
		\[
		6^6 \cdot m^5 = \Delta(G) = D_{K/\mathbb{Q}}(1, \theta, \ldots, \theta^5) = (-1)^{\frac{6 \cdot 5}{2}} N_{K/\mathbb{Q}}(G'(\theta)) = -N_{K/\mathbb{Q}}(6 \theta^5) = -6^6 \bigl(N_{K/\mathbb{Q}}(\theta)\bigr)^5.
		\]
		Thus, $|N_{K/\mathbb{Q}}(\theta)| = |m|$, so $N_{\theta} = \{0, 1, \ldots, |m|-1\}$. Hence, by Proposition~\ref{Kov}, $(\theta, N_{\theta})$ is a CNS in $\mathbb{Z}_K$, and $G(x) = x^6 - m$ is a CNS polynomial.
		
		As a particular case, if $m = 30$, then every element of $\mathbb{Z}_K$ can be uniquely expressed as
	$
		\sum_{i=0}^{l} a_i \theta^i,
		$
		with coefficients $a_i \in   \{0,1, \ldots, 29\}
		$ for $i = 0, \ldots, l$, and $a_l \neq 0$.
	\end{example}
\begin{example}
	Let $F(x) = x^{2^r \cdot 3^k} - 6^w$ with $w \ge 5$ and $w \equiv 1, 5 \pmod{6}$. Let $t, s$, and $\theta$ be as in the proof of Theorem~\ref{monogen}. Then $(\theta, N_{\theta})$ forms a CNS in $\mathbb{Z}_K$, where the set of digits is $N_{\theta} = \{0, 1, \ldots, 5\}$.
\end{example}
\section*{Declarations}
\textbf{Conflict of interest:} There is no conflict of interest related to this paper. The authors have freely chosen this journal without any consideration.
\section*{Data availability} 
Not applicable.

%\section{Examples}

\end{document}